\def\Ddots{\mathinner{\mkern1mu\raise\p@
\vbox{\kern7\p@\hbox{.}}\mkern2mu
\raise4\p@\hbox{.}\mkern2mu\raise7\p@\hbox{.}\mkern1mu}}
\newtheorem{theorem}{Theorem}[section]
\newtheorem{example}{Example}[section]
\newtheorem{remark}{Remark}[section]
\newtheorem{question}{Question}[section]
\newenvironment{dem1,1}[1][Proof of Theorem \ref{mainthm1,1}]{\noindent\textit{#1.} }{\hfill $\square$}
\newenvironment{dem1,2}[1][Proof of Theorem \ref{mainthm1,2}]{\noindent\textit{#1.} }{\hfill $\square$}
\newenvironment{dem2,1}[1][Proof of Theorem \ref{mainthm2,1}]{\noindent\textit{#1.} }{\hfill $\square$}
\newenvironment{dem2,2}[1][Proof of Theorem \ref{mainthm2,2}]{\noindent\textit{#1.} }{\hfill $\square$}
\begin{document}

\title{Exploring Functional Identities: From Division Rings to Matrix Algebras}

\thanks{The first author was supported by FAPESP number 2022/14579-0.}

\author[Daniel Kawai]{Daniel Kawai}
\address{Daniel Kawai, S\~{a}o Paulo University, 
Rua Mat\~{a}o, 1010, 
CEP 05508-090, S\~{a}o Paulo, Brazil\\
{\em E-mail}: {\tt daniel.kawai@usp.br}}{}

\author[Bruno Leonardo Macedo Ferreira]{Bruno Leonardo Macedo Ferreira}
\address{Bruno Leonardo Macedo Ferreira, Federal University of Technology-Paran\'{a}-Brazil, Federal University of ABC and S\~{a}o Paulo University, 
Avenida Professora Laura Pacheco Bastos, 800, 
85053-510, Guarapuava, Brazil, Av. dos Estados, 5001 - Bangú, Santo André - SP, 09280-560 and Rua Mat\~{a}o, 1010, 
CEP 05508-090, S\~{a}o Paulo, Brazil \\\linebreak
{\em E-mail}: {\tt 
 brunolmfalg@gmail.com}}{}

\begin{abstract}
In this paper, we tackle unresolved inquiries by Ferreira et al. \cite{bruno} in their recent publication, ``Functional Identity on Division Algebras". We delve into the intricate behavior of additive functions on matrix algebras over division rings through rigorous analysis and theorem-proving. Our findings offer valuable insights into the nature of these functions and their implications for algebraic structures.\vspace*{.1cm}

\noindent{\it Keywords}: Functional identity, division algebras, additive functions, matrix algebras.
\vspace*{.1cm}

\noindent{\it 2020 MSC}: 16K20; 16R60.
\end{abstract}

\maketitle

\section{Introduction}
Over the decades, research in functional equations and identities has been pivotal in understanding the properties of mappings in algebraic structures and their relevance across various mathematical disciplines.

In a pioneering study in 1987, Vukman \cite{Vuk} studied a result on Cauchy's functional equations in division rings. He established conditions under which any additive mapping on a division ring with certain characteristics must be identically zero. He showed that if $D$ is a division ring with characteristics different from $2$ and $f : D \rightarrow D$ is an additive function satisfying:
\begin{equation}
\label{vukman}
f(x) = -x^2f(x^{-1})
\end{equation}
for all $x\neq 0$, then $f(x) = 0$ for all $x \in D$. This result, while fundamental, left additional questions to be explored regarding functional identities in more general algebraic structures.

Subsequently, Bre$\check{s}$ar \cite{bresar} expanded this line of inquiry by investigating functional identities involving two additive mappings, $F$ and $G$, in a division ring. He presented a characterization theorem, revealing specific forms that these mappings must adhere to, enriching our understanding of the structure of these mappings.

Motivated by Bre$\check{s}$ar's results and intending to extend these investigations further, this article sets out to resolve the questions raised by Catalano \cite{catalano} in 2018. Catalano explored functional identities in division rings, relating additive mappings to derivations and shedding light on their underlying properties.

In 2023, Ferreira et al. \cite{bruno} studied the functional equation:
\begin{equation}
\label{moraes}
x^{-1}f(x)+g(x^{-1})=0
\end{equation}
and proved the following theorem.
\begin{theorem}
Let $A=M_n(D)$ be the algebra of $n\times n$ square matrices over a division ring $D$ with characteristics different from $2$. Let $f,g: A \rightarrow A$ be additive functions, satisfying identity \eqref{moraes} for every invertible $x$. Then $f(x)=g(x)=0$, for all $x\in A$.
\end{theorem}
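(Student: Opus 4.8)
The plan is to track the map $\phi\colon GL_n(D)\to M_n(D)$ given by $\phi(x)=x^{-1}f(x)$ and to show it is identically zero by playing the additivity of $f$ against the additivity of $g$. Rewriting \eqref{moraes} as $g(x^{-1})=-x^{-1}f(x)$ yields the dictionary $\phi(x)=-g(x^{-1})$, valid for every invertible $x$. Since $\mathrm{char}\,D\neq 2$, the scalar $2=1+1$ is a central unit of $D$, so $2I_n$ is a central invertible matrix; hence $2x\in GL_n(D)$ whenever $x\in GL_n(D)$, and left multiplication by $2I_n$ is an automorphism of the additive group of $M_n(D)$, so every additive map $h$ on $M_n(D)$ satisfies $h(2^{-1}y)=2^{-1}h(y)$ (its inverse is left multiplication by $2^{-1}I_n$). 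This argument will be uniform in $n$; in particular it also covers the division ring case $n=1$.

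First I would use that $f$ is additive: for $x\in GL_n(D)$ one has $f(2x)=f(x)+f(x)=2f(x)$ and $(2x)^{-1}=2^{-1}x^{-1}$, so
\[ \phi(2x)=(2x)^{-1}f(2x)=\bigl(2^{-1}x^{-1}\bigr)\bigl(2f(x)\bigr)=2^{-1}\cdot 2\cdot x^{-1}f(x)=\phi(x), \]
where the third equality uses that $2I_n$ is central. Then I would use that $g$ is additive, via the dictionary: $\phi(2x)=-g\bigl((2x)^{-1}\bigr)=-g\bigl(2^{-1}x^{-1}\bigr)=-2^{-1}g(x^{-1})=2^{-1}\phi(x)$. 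Comparing the two expressions for $\phi(2x)$ forces $\phi(x)=2^{-1}\phi(x)$, i.e. $\phi(x)=0$, and therefore $f(x)=x\phi(x)=0$ for all $x\in GL_n(D)$; feeding this back into \eqref{moraes} gives $g(x^{-1})=-x^{-1}f(x)=0$ for every invertible $x$ as well, so $g$ too vanishes on $GL_n(D)$.

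It remains to propagate the vanishing from $GL_n(D)$ to all of $M_n(D)$. For this I would use the rank normal form: any $a\in M_n(D)$ can be written as $a=P\operatorname{diag}(I_r,0)Q$ with $P,Q\in GL_n(D)$ (Gaussian elimination over a division ring, $r$ the rank of $a$), and the decomposition $\operatorname{diag}(I_r,0)=\operatorname{diag}(2I_r,I_{n-r})-I_n$ gives
\[ a=P\operatorname{diag}(2I_r,I_{n-r})Q-PQ, \]
a difference of two invertible matrices (again $\mathrm{char}\,D\neq 2$ makes the entries $2$ and $1$ units). Additivity then yields $f(a)=f\bigl(P\operatorname{diag}(2I_r,I_{n-r})Q\bigr)-f(PQ)=0$, and likewise $g(a)=0$. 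Hence $f=g=0$.

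I do not expect the computations to be the hard part — they are short. The real content is conceptual: choosing the right auxiliary object, namely $\phi(x)=x^{-1}f(x)$, which \eqref{moraes} simultaneously identifies with $-g(x^{-1})$, and noticing that additivity of $f$ leaves $\phi$ invariant under $x\mapsto 2x$ whereas additivity of $g$ multiplies it by $2^{-1}$ — an incompatibility that collapses $\phi$. The remainder is boundary bookkeeping: that $2I_n$ is central and invertible precisely because $\mathrm{char}\,D\neq2$, that division by $2$ is then legitimate on the additive group, and that the rank normal form (equivalently, the fact that every matrix over a division ring of characteristic different from $2$ is a difference of two invertible matrices) is available — these are exactly the spots where the hypothesis $\mathrm{char}\,D\neq 2$ and the restriction to invertible test matrices get consumed.
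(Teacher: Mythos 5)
Your proof is correct, and it departs from the paper's argument (the proof of Theorem \ref{t1}, whose case $n=1$ is exactly this statement) at two points. For the vanishing of $f$ on invertibles, the paper substitutes $x\mapsto -x$ and adds the two resulting identities, exploiting that $x^{-1}f(x)$ is invariant under sign change while $g(x^{-1})$ changes sign; you substitute $x\mapsto 2x$ and exploit that $\phi(x)=x^{-1}f(x)$ is invariant under doubling while $-g(x^{-1})$ is halved. This is the same homogeneity-mismatch idea carried out with a different central unit, and both versions consume $\mathrm{Char}(D)\neq 2$ in the same way. (One small wording issue: that an additive $h$ satisfies $h(2^{-1}y)=2^{-1}h(y)$ follows from $h(y)=h(2^{-1}y+2^{-1}y)=2h(2^{-1}y)$ together with the invertibility of the central element $2$, not merely from doubling being an additive automorphism; the claim itself is true.) The genuinely different step is the passage from $GL_n(D)$ to all of $M_n(D)$: the paper decomposes an arbitrary matrix into the pieces $a_{ij}e_{ij}$ and kills each piece using the invertible elements $1-2e_{ii}$, $1+(a-1)e_{ii}$ and $1+ae_{ij}$, whereas you invoke the rank normal form $a=P\,\mathrm{diag}(I_r,0)\,Q$ over a division ring to exhibit every matrix as a difference of two invertible matrices. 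Your route is shorter and isolates a single clean fact (every matrix over a division ring of characteristic $\neq 2$ is a difference of two units), at the price of quoting Gaussian elimination over a division ring; the paper's route is more hands-on but entirely self-contained. Both arguments are complete.
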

This work aims to address and resolve the open questions raised in the article \cite{bruno}, which delves into the investigation of functional identities in algebra. They posed the following two questions:
\begin{question}\label{q1}
Let $D$ be an algebra such that there exist additive functions $f,g : D \rightarrow D$ satisfying the following equation:
\begin{equation}
\label{vukman1}
x^{-n}f(x)+g(x^{-1})=0
\end{equation}
for every invertible $x$, where $n$ is an integer such that $n>2$. Is it possible to characterize the functions $f$ and $g$?
\end{question}
\begin{question}\label{q2}
If $D$ is a field of characteristic $2$ and there exist additive functions $f, g :D \rightarrow D$ satisfying identity \eqref{moraes} for every $x\neq0$, is $D$ a perfect field?
\end{question}

Note that Vukman's article deals with the particular case of equation \eqref{vukman1} where $n=2$ and $f=g$, and the main results of Ferreira et al. article \cite{bruno} deal with the particular case of equation \eqref{vukman1} where $n=1$.

The central focus of this work is to provide solutions to a modified version of Vukman's functional identity, specifically $x^{-1}f(x) + g(x^{-1}) = 0$, where $f$ and $g$ are additive mappings on a division algebra. Our approach aims to characterize the additive mappings that preserve the identity (\ref{vukman1}), exploring theoretical aspects.

Furthermore, we extend our investigation to matrix algebras over division algebras, examining analogous functional equations and providing characterizations for additive mappings that preserve these identities. We hope that this study will contribute not only to the understanding of functional equation theory but also to the advancement of knowledge in various areas of mathematics.

\section{Main Results}
In this section, we present our main theorems, which address open questions left by the recent article published by Ferreira et al. \cite{bruno}. 
Before presenting our primary results, it is pertinent to revisit the well-known identity attributed to Hua, an important element for the development of this paper.

Consider any two elements $a, b \in \mathcal{A}$, ensuring $ab \neq 0, 1$. We encounter the pivotal \textit{Hua's identity}:
\[a - \left[ a^{-1} + \left( b^{-1} - a \right)^{-1} \right]^{-1} = aba.\]

\begin{theorem}\label{t1}
Let $n$ be an odd integer and $m$ a positive integer. Let $D$ be a division ring with $\mathrm{Char}(D)\neq2$. Let $A=M_m(D)$ be the algebra of $m\times m$ matrices with entries in $D$. Let $A^*$ be the set of invertible matrices. Let $f,g:A\rightarrow A$ be additive functions such that:
\begin{equation}
\label{1}
x^{-n}f(x)+g(x^{-1})=0\quad\text{if $x\in A^*$}
\end{equation}
Then $f(x)=g(x)=0$ for all $x\in A$.
\end{theorem}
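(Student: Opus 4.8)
The plan is to reduce the matrix-algebra statement to the division-ring case ($m=1$), and then to handle that case directly using Hua's identity together with the substitution $x \mapsto x^{-1}$. The reduction step should follow the strategy of Ferreira et al.\ \cite{bruno}: one shows that an additive map satisfying \eqref{1} on $A=M_m(D)$ must vanish on all rank-one idempotents and, by additivity, on the diagonal matrix units $e_{ii}$, then on sums like $e_{ii}+e_{jj}$ and on the off-diagonal units $e_{ij}$ (using that $e_{ii}+e_{jj}+de_{ij}$ is a sum of an invertible matrix and controlled pieces, or by embedding suitable $2\times 2$ invertible blocks), and finally, using that every matrix is an additive combination of elements of the form $d\,e_{ij}$, concludes $f=g=0$. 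The key technical input is that enough invertible matrices can be built to probe every matrix unit; since $\mathrm{Char}(D)\neq 2$ this works as in the $n=1$ case.

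For the heart of the argument I would first rewrite \eqref{1} as $f(x) = -x^{n} g(x^{-1})$ for invertible $x$, and apply it also with $x$ replaced by $x^{-1}$ to get $f(x^{-1}) = -x^{-n} g(x)$, hence $g(x) = -x^{n} f(x^{-1})$. Combining gives the symmetric pair of relations $f(x)=-x^{n}g(x^{-1})$ and $g(x)=-x^{n}f(x^{-1})$, so in particular $F := f - g$ and $H := f+g$ satisfy the cleaner identities $F(x) = x^{n} F(x^{-1})$ and $H(x) = -x^{n} H(x^{-1})$ — or one can simply work with $f$ and $g$ together. The parity of $n$ is what will be exploited: writing $n = 2k+1$, I would compute $f(x) = -x^{2k+1} g(x^{-1})$ and try to ``peel off'' the even power $x^{2k}$ by a telescoping/iteration that reduces the exponent, aiming to arrive at the $n=1$ situation $x^{-1}f(x) + \tilde g(x^{-1}) = 0$ for suitable additive $\tilde g$, at which point Theorem~1 of Ferreira et al.\ (quoted in the excerpt) applies and forces everything to be zero. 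The linearization tool for this is the standard trick of replacing $x$ by $x+1$, $x-1$, and more generally using that $(x+1)^{-1}$, $x^{-1}$ are related by Hua-type identities so that the additive relation in $x+1$ decomposes into pieces that isolate lower-degree behavior.

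Concretely, in the division ring $D$ I would exploit Hua's identity $a - [a^{-1} + (b^{-1}-a)^{-1}]^{-1} = aba$ with $b$ a central element (e.g.\ $b=1$, or $b = \lambda \in Z(D)^{*}$) to generate the relation $f(axa) = \text{(expression in $f$ evaluated at invertibles)}$ and, by additivity and the identity $a x a = \tfrac12\big((a+x)a(a+x) - a^3 - a x^2 \cdot \text{\ldots}\big)$-type expansions (valid since $\mathrm{Char}(D)\neq 2$), derive that $x \mapsto f(x) + x^{n} g(x^{-1})$ already known to vanish propagates into constraints like $f(x^{2}) = x f(x) + f(x) x - \text{(correction)}$, eventually pinning $f$ and $g$ on squares and then, by a polarization $x^2 \to (x+1)^2 = x^2 + 2x + 1$ using $\mathrm{Char}(D)\neq 2$, on all of $D$. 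The main obstacle I anticipate is exactly the exponent reduction from odd $n$ down to $n=1$: unlike the $n=2$, $f=g$ case handled by Vukman where a single squaring identity closes the argument, here $x^{n}$ for large odd $n$ does not interact cleanly with additivity, and making the Hua-identity bookkeeping actually collapse the higher powers — rather than generating an ever-growing family of relations — is the delicate point; a secondary subtlety is ensuring the matrix-to-division-ring reduction does not secretly require $n=1$, i.e.\ checking that the construction of invertible test matrices used to kill the matrix units only needs the $1\times 1$ identity \eqref{1} with the same odd $n$.
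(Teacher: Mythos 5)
There is a genuine gap, and it sits exactly where you flagged your ``main obstacle.'' The heart of the paper's proof is a one-line use of the parity of $n$ that your proposal never finds: for invertible $x$, apply \eqref{1} to $-x$. Since $n$ is odd, $(-x)^{-n}=-x^{-n}$, and additivity gives $f(-x)=-f(x)$ and $g(-x^{-1})=-g(x^{-1})$, so the identity at $-x$ reads $x^{-n}f(x)-g(x^{-1})=0$. Adding this to \eqref{1} yields $2x^{-n}f(x)=0$, hence $f(x)=0$ for every invertible $x$ (using $\mathrm{Char}(D)\neq 2$), and then \eqref{1} applied at $x^{-1}$ gives $g(x)=0$ on invertibles as well. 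No Hua identity, no exponent reduction from $n$ down to $1$, and no appeal to the Ferreira et al.\ theorem is needed. Your proposed telescoping of $x^{2k+1}$ via Hua-type bookkeeping is not carried out, and you yourself concede it is the delicate unresolved point; as written, the central part of your argument is a plan rather than a proof, and it is not clear it can be made to close (that kind of machinery is what the paper needs for the \emph{even} case, Theorem~2.2, where the $-x$ trick fails).

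Your first paragraph (extending from invertibles to all of $A$) is essentially sound and close to the paper's Steps 3--4: the paper kills idempotents via $h(1-2x)=0$ with $(1-2x)^2=1$, then kills $a e_{ii}$ using the invertible matrix $1+(a-1)e_{ii}$ and $a e_{ij}$ ($i\neq j$) using the unipotent $1+ae_{ij}$, and finishes by additivity. Note, however, that the paper works directly in $M_m(D)$ throughout; your proposed reduction to the $m=1$ case is both unnecessary and not obviously legitimate, since an additive $f:M_m(D)\to M_m(D)$ satisfying \eqref{1} does not in any evident way induce additive maps $D\to D$ satisfying the same identity.
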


\begin{proof}
 We proceed by delineating several steps.

\textbf{Step 1:} To begin, we show that $f(x)=0$ for all $x\in A^*$. Let $x\in A^*$. Utilizing \eqref{1} with $x\mapsto x$ and $x\mapsto-x$, where $n$ is odd, yields:
\begin{equation*}
x^{-n}f(x)+g(x^{-1})=0,
\end{equation*}
\begin{equation*}
x^{-n}f(x)-g(x^{-1})=0,
\end{equation*}
implying $2x^{-n}f(x)=0$, and thus $f(x)=0$.

\textbf{Step 2:} Next, we establish that $g(x)=0$ for all $x\in A^*$. For $x\in A^*$, employing \eqref{1} with $x\mapsto x^{-1}$ yields:
\begin{equation*}
0=x^nf(x^{-1})+g(x)=g(x),
\end{equation*}
confirming $g(x)=0$.

\textbf{Step 3:} Consider $h\in\{f,g\}$. We aim to prove that $h(x)=0$ for all $x\in A$ such that $x^2=x$. For $x\in A$ satisfying $x^2=x$, observe:
\begin{equation*}
(1-2x)^2=1-4x+4x^2=1,
\end{equation*}
thus $1-2x\in A^*$. Additionally, $1\in A^*$, hence:
\begin{equation*}
0=h(1-2x)=-2h(x),
\end{equation*}
leading to $h(x)=0$.

\textbf{Step 4:} We conclude the proof. Let $h\in\{f,g\}$. For each $(i,j)$, let $e_{ij}\in A$ denote the matrix with the $(i,j)$ entry being $1$ and the others $0$. Then, for each $i$, $e_{ii}^2=e_{ii}$, implying $h(e_{ii})=0$.
\begin{itemize}
\item For $i$ and $a\in D$, if $a\neq0$, then $1+(a-1)e_{ii}\in A^*$, hence $h(1+(a-1)e_{ii})=0$, and consequently $h(ae_{ii})=0$. Furthermore, as $h$ is additive, $h(0e_{ii})=h(0)=0$.
\item For each $(i,j)$ with $i\neq j$ and for $a\in D$, $1+ae_{ij}\in A^*$, thus $h(1+ae_{ij})=0$, and hence $h(ae_{ij})=0$.
\end{itemize}
Let $x\in A$, then $x$ takes the form $x=\sum_{i,j}a_{ij}e_{ij}$ where $a_{ij}\in D$, yielding:
\begin{equation*}
h(x)=h\left(\sum_{i,j}a_{ij}e_{ij}\right)=\sum_{i,j}h\left(a_{ij}e_{ij}\right)=0
\end{equation*}
Therefore, $h(x)=0$ for all $x\in A$.
\end{proof}

And what about the scenario when $n$ is even? We have obtained fragmented results.

\begin{theorem}\label{t2}
Let $n$ be an even integer such that $n>2$. Let $D$ be a division ring with $\mathrm{Char}(D)>n$. Let $f,g:D\rightarrow D$ be additive functions such that:
\begin{equation}
\label{2}
x^{-n}f(x)+g(x^{-1})=0\quad\text{if $x\neq0$}
\end{equation}
Then $f(x)=g(x)=0$ for all $x\in D$.
\end{theorem}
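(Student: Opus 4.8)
The plan is to adapt the proof of Theorem~\ref{t1} to the even case, replacing the substitution $x\mapsto -x$ — which is useless here, since for even $n$ one has $(-x)^{-n}=x^{-n}$ and $f(-x)=-f(x)$, so that \eqref{2} at $-x$ merely reproduces \eqref{2} — by a \emph{scaling} substitution $x\mapsto mx$ for a suitably chosen positive integer $m$. Any positive integer $m$ that is nonzero in $D$ is a nonzero central element, hence invertible, and then $f(mx)=mf(x)$, $(mx)^{-n}=m^{-n}x^{-n}$ and $g(m^{-1}y)=m^{-1}g(y)$. Substituting $x\mapsto mx$ in \eqref{2} and multiplying on the left by $m$ gives
\begin{equation*}
m^{2-n}x^{-n}f(x)+g(x^{-1})=0\qquad(x\neq 0),
\end{equation*}
and subtracting \eqref{2} from this yields $(m^{2-n}-1)\,x^{-n}f(x)=0$ for every $x\neq 0$.

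The crux is then the elementary claim that, when $\mathrm{Char}(D)>n$, the integer $m\geq 2$ can be chosen so that $m^{2-n}-1$ is invertible in $D$ — equivalently, so that $m$ and $m^{n-2}-1$ are both nonzero in $D$. If $\mathrm{Char}(D)=0$, take $m=2$: since $n\geq 4$ we have $2^{n-2}-1\geq 3\neq 0$. If $\mathrm{Char}(D)=p>n$, take for $m$ an integer in $\{2,\dots,p-1\}$ representing a generator of the cyclic group $(\mathbb{Z}/p\mathbb{Z})^{\times}$ (such a representative exists since $p\geq 5$): its order is $p-1\geq n>n-2\geq 1$, so $m^{n-2}\not\equiv 1\pmod p$, while clearly $m\not\equiv 0\pmod p$. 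Hence $m$ and $m^{n-2}-1$ are nonzero central elements of $D$, so they are invertible, and therefore so is $m^{2-n}-1=-m^{2-n}(m^{n-2}-1)$.

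Fixing such an $m$, the relation $(m^{2-n}-1)x^{-n}f(x)=0$ forces $x^{-n}f(x)=0$, hence $f(x)=0$, for all $x\neq 0$; together with $f(0)=0$ this gives $f\equiv 0$ on $D$. Then \eqref{2} collapses to $g(x^{-1})=0$ for all $x\neq 0$, and since every nonzero element of $D$ arises as such an $x^{-1}$ we conclude $g\equiv 0$, which finishes the proof. I expect the only genuine obstacle to be producing the integer $m$ — i.e.\ locating a nonzero residue modulo $\mathrm{Char}(D)$ whose $(n-2)$-th power is not $1$ — and this is exactly where the hypothesis $\mathrm{Char}(D)>n$ (rather than merely $\mathrm{Char}(D)\neq 2$, as in Theorem~\ref{t1}) is needed, and also why the naive choice $m=2$ need not work in positive characteristic. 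Note that this argument uses only $n\neq 2$, not that $n$ is even; parity matters solely because the even case is precisely the one in which the cheaper substitution $x\mapsto -x$ fails.
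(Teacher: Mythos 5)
Your proof is correct, and it takes a genuinely different — and substantially more elementary — route than the paper's. The paper first symmetrizes, setting $h=f\pm g$ to obtain $h(x)=ex^{n}h(x^{-1})$, then feeds this through Hua's identity to get the polynomial relation $h(a^2)=(1+a^n-(1-a)^n)h(a)-a^nh(1)$, and finally separates this into homogeneous components by substituting $ma$ for $m=0,\dots,n$ and inverting a Vandermonde matrix — which is exactly where $\mathrm{Char}(D)>n$ is used. You instead substitute $x\mapsto mx$ directly into \eqref{2} and exploit the fact that nonzero integers are central and invertible in $D$, reducing the whole theorem to the arithmetic of finding one integer $m$ with $m\not\equiv 0$ and $m^{n-2}\not\equiv 1$ modulo the characteristic; your primitive-root choice is valid since $0<n-2<p-1$ when $p>n\geq 4$, and all the noncommutative manipulations are legitimate because $m$ lies in the prime subfield. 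Beyond being shorter and avoiding Hua's identity and the Vandermonde machinery entirely, your argument actually establishes the conclusion under the weaker hypothesis that $\mathrm{Char}(D)=0$ or $\mathrm{Char}(D)=p$ with $p-1\nmid n-2$, which by the paper's own Example \ref{ex} is the optimal condition; and, as you note, it applies uniformly to all $n\neq 2$, recovering the division-ring case of Theorem \ref{t1} as well (the matrix case of that theorem still needs the paper's idempotent/unit decomposition to pass from invertible elements to all of $A$).
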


\begin{proof}
Once again, we proceed with the proof of Theorem 4.1 by delineating several steps.

\medskip
\textbf{Step 1:} Let $x\neq0$. Then, utilizing \eqref{2} with $x\mapsto x$ and $x\mapsto x^{-1}$, we have:
\begin{equation}
\label{a}
x^{-n}f(x)+g(x^{-1})=0
\end{equation}
and also:
\begin{equation*}
x^nf(x^{-1})+g(x)=0
\end{equation*}
implying:
\begin{equation}
\label{b}
x^{-n}g(x)+f(x^{-1})=0
\end{equation}
Consequently, from \eqref{a} and \eqref{b}, we obtain:
\begin{equation*}
x^{-n}(f+g)(x)+(f+g)(x^{-1})=0
\end{equation*}
\begin{equation*}
x^{-n}(f-g)(x)-(f-g)(x^{-1})=0
\end{equation*}

\medskip
\textbf{Step 2:} Let $h\in\{f+g,f-g\}$. Taking $e=-1$ if $h=f+g$ and $e=1$ if $h=f-g$, we have:
\begin{equation}
\label{3}
h(x)=ex^nh(x^{-1})\quad\text{if $x\neq0$}
\end{equation}
Let $a\neq0$ and $a\neq1$, then by Hua's identity, we have:
\begin{equation}
a^2=a-(a^{-1}+(1-a)^{-1})^{-1}
\end{equation}
thus:
\begin{equation*}
\begin{array}{rcl}
h(a^2)&=&h(a-(a^{-1}+(1-a)^{-1})^{-1})\\
&=&h(a)-h((a^{-1}+(1-a)^{-1})^{-1})\\
&=&h(a)-e(a^{-1}+(1-a)^{-1})^{-n}h(a^{-1}+(1-a)^{-1})\\
&=&h(a)-ea^n(1-a)^nh(a^{-1}+(1-a)^{-1})\\
&=&h(a)-ea^n(1-a)^n(h(a^{-1})+h((1-a)^{-1}))\\
&=&h(a)-ea^n(1-a)^nh(a^{-1})-ea^n(1-a)^nh((1-a)^{-1})\\
&=&h(a)-e^2a^n(1-a)^na^{-n}h(a)-e^2a^n(1-a)^n(1-a)^{-n}h(1-a)\\
&=&h(a)-(1-a)^nh(a)-a^nh(1-a)\\
&=&h(a)-(1-a)^nh(a)-a^n(h(1)-h(a))\\
&=&h(a)-(1-a)^nh(a)-a^nh(1)+a^nh(a)\\
&=&(1+a^n-(1-a)^n)h(a)-a^nh(1),
\end{array}
\end{equation*}
thus:
\begin{equation*}
h(a^2)=(1+a^n-(1-a)^n)h(a)-a^nh(1).
\end{equation*}
The above equation also holds for $a=0$ and $a=1$ as well. As $n$ is even and $n\geq 4$, we may express this as:
\begin{equation*}
\left(h(a^2)-nah(a)\right)+\sum_{k=3}^{n-1}(-1)^{k-1}\binom{n}{k-1}a^{k-1}h(a)+\left(-na^{n-1}h(a)+a^nh(1)\right)=0
\end{equation*}
Now, let:
\begin{equation*}
\begin{array}{rcl}
t_0(x)&=&0\\
t_1(x)&=&0\\
t_2(x)&=&h(x^2)-nxh(x)\\
t_k(x)&=&(-1)^{k-1}\binom{n}{k-1}x^{k-1}h(x)\quad\text{se $3\leq k\leq n-1$}\\
t_n(x)&=&-nx^{n-1}h(x)+x^nh(1)
\end{array}
\end{equation*}
Each $t_k(x)$ is homogeneous with degree $k$ in $x$. Therefore, for all $a\in D$, we have:
\begin{equation}
\label{12}
t_0(a)+t_1(a)+\cdots+t_n(a)=0
\end{equation}
Let $a\in D$, then for all $m\in{0,\dots,n}$, by applying \eqref{12} with $ma$ instead of $a$, we derive:
\begin{equation*}
m^0t_0(a)+m^1t_1(a)+\cdots+m^nt_n(a)=0,
\end{equation*}
hence, if we set:
\begin{equation*}
V=\begin{pmatrix}
0^0&0^1&\cdots&0^n\\
1^0&1^1&\cdots&1^n\\
&&\vdots&\\
n^0&n^1&\cdots&n^n
\end{pmatrix},\quad\quad X=\begin{pmatrix}
t_0(a)\\
t_1(a)\\
\vdots\\
t_n(a)
\end{pmatrix},
\end{equation*}
then the matrix identity $VX=0$ holds. Thus, if $U$ is the adjoint\footnote{The adjoint matrix of $A$ is the transpose of the cofactor matrix of $A$.} of $V$, we have $UVX=0$, and since $UV=\det(V)I$, we get $\det(V)X=0$. By Vandermonde's determinant identity, we have:
\begin{equation*}
\det(V)=\prod_{0\leq i<j\leq n}(j-i),
\end{equation*}
but also $\mathrm{Char}(D)>n$, so $X=0$, particularly $t_2(a)=t_n(a)=0$, thereby yielding:
\begin{equation}
\label{5}
h(a^2)=nah(a)
\end{equation}
\begin{equation}
\label{6}
na^{n-1}h(a)=a^nh(1)
\end{equation}
Utilizing \eqref{6} with $1$ in place of $a$, we obtain $(n-1)h(1)=0$, yet $\mathrm{Char}(D)>n$, implying $h(1)=0$. Hence, for all $a\in D$ with $a\neq0$, by \eqref{6}, we have $na^{n-1}h(a)=a^nh(1)=0$, yet $\mathrm{Char}(D)>n$, hence $h(a)=0$. Moreover, as $h$ is additive, we have $h(0)=0$ as well.

\medskip
\textbf{Step 3:} Consequently, we conclude that $h(x)=0$ for all $x\in D$. Therefore, for all $x\in D$, we have $f(x)+g(x)=f(x)-g(x)=0$, implying $2f(x)=0$, hence $f(x)=0$ and also $g(x)=0$, completing the proof.
\end{proof}

\begin{remark}
It is essential to consider the conditions on the characteristic of the ring $D$. This necessity becomes evident in the following example.
\begin{example}\label{ex}
Let $n$ be any integer, and $p$ be a prime such that $p-1$ divides $n-2$. Consider $D=\mathbb{Z}_p$, the ring of integers modulo $p$, and the functions:
\begin{equation*}
f(x)=x \quad \text{and} \quad g(x)=-x.
\end{equation*}
Then $D$ is a division ring and $f,g: D\rightarrow D$ are additive functions satisfying equation \eqref{vukman1} for every $x\neq0$, for the Fermat's little theorem implies $x^{p-1}=1$ for every $x\neq0$. However, $f(1)\neq0$.
\end{example}
Example \ref{ex} illustrates the necessity of the condition $\mathrm{Char}(D) \neq 2$ in Theorem \ref{t1}, as for any integer $n$, the prime number $p = 2$ satisfies $p - 1 \mid n - 2$. Furthermore, it underscores the significance of retaining the condition $\mathrm{Char}(D)>n$ in Theorem \ref{t2}. A division ring always has a prime or infinite characteristic. Moreover, for even integers $n$ such that $n>2$, there may exist several prime numbers $p$ satisfying $p-1\mid n-2$. For instance, if $n=4$ or $n=6$, then every prime number $p$ such that $p\leq n$ satisfies $p-1\mid n-2$.
\end{remark}

Now, we endeavor to provide a negative response to Question \ref{q2}, illustrating that within the non-perfect field $D=\mathbb{Z}_2(t)$ of rational functions with coefficients in $\mathbb{Z}_2$, numerous solutions $(f,g)$ to the functional equation \eqref{moraes} exist, where $f$ and $g$ are additive.

\begin{theorem}
Let $D=\mathbb{Z}_2(t)$ denote the field of rational functions with coefficients in $\mathbb{Z}_2$. For any $A,B\in D$, there exists a unique pair $(f,g)$ consisting of additive functions $f,g:D\rightarrow D$ satisfying $f(1)=A$, $f(t)=B$, and equation \eqref{moraes} for every $x\in D$ such that $x\neq0$.
\end{theorem}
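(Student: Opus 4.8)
The plan is to translate the functional equation \eqref{moraes}, which over $\mathbb{Z}_2$ reads $x^{-1}f(x) = g(x^{-1})$ (since $-1=1$), into a recursive procedure that determines $f$ and $g$ on all of $D=\mathbb{Z}_2(t)$ from the two prescribed values $f(1)=A$ and $f(t)=B$. Setting $h = f$ and rewriting, the identity $f(x) = x\,g(x^{-1})$ together with its companion (obtained by $x \mapsto x^{-1}$) should, exactly as in Step 1–2 of the proof of Theorem \ref{t2}, force a relation purely in terms of $f$: namely $g(x) = x^{-1}f(x^{-1})\cdot(\cdots)$, and then via Hua's identity one derives the degree-$1$ (in characteristic $2$, $n=1$) analogue of equation \eqref{5}–\eqref{6}. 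Concretely, running the Hua computation from Step 2 with $n=1$ yields, after collecting terms mod $2$, an identity of the shape $f(a^2) = f(a) + (\text{linear in } a)\cdot f(1) + \dots$ — the point being that $f$ is almost a derivation-like map: it is additive, and $f(a^2)$ is controlled by $f(a)$ and $f(1)$. I would first carry out this computation carefully to obtain the precise recursion, say $f(a^2) = a^2 f(1) + \text{something}$; this is the analogue of \eqref{5} but now it does \emph{not} collapse to $0$ because the Vandermonde/characteristic argument of Step 2 fails when $\mathrm{Char}(D) = 2 \le n$... wait, here $n=1$, so actually the relevant obstruction is different and must be identified explicitly.

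Next, I would set up the existence half by explicit construction. Every element of $D=\mathbb{Z}_2(t)$ is a ratio of polynomials; using additivity, $f$ is determined on $\mathbb{Z}_2[t]$ once it is known on the monomials $t^k$, and then extended to ratios via the functional equation. The key structural fact to exploit is that $\mathbb{Z}_2(t)$, as a vector space over the subfield $\mathbb{Z}_2(t^2)$, is $2$-dimensional with basis $\{1,t\}$, and $\mathbb{Z}_2(t^2) \cong \mathbb{Z}_2(t)$ via $t \mapsto t^2$; iterating, one gets a tower $D \supset D^2 \supset D^4 \supset \cdots$ with $[D^{2^k} : D^{2^{k+1}}] = 2$. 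The recursion derived in the first paragraph expresses $f$ on $D^{2^k}$ in terms of $f$ on $D^{2^{k+1}}$ (roughly, $f(\alpha^2)$ in terms of $f(\alpha)$), so the pair of initial values $f(1), f(t)$ propagates consistently down the tower. I would define $f$ on each coset layer by this rule and check that the definition is unambiguous (independence of the representation $x = u + vt$ with $u,v \in D^2$) and that $g$, defined by $g(x) = x^{-1}f(x^{-1})$ for $x\neq 0$ — equivalently forced by \eqref{moraes} — is additive; additivity of $g$ is where \eqref{moraes} itself gets used, turning the sum $g(x^{-1}+y^{-1})$ into manageable pieces via Hua's identity on $x^{-1}+y^{-1}$.

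For uniqueness, I would argue that any solution pair must satisfy the recursion of paragraph one, and since the recursion together with $f(1), f(t)$ determines $f$ layer by layer on the tower $\bigcup_k D^{2^k} \cdot(\text{finite-dim pieces})$, which exhausts $D$, the values of $f$ (hence of $g$) are pinned down. I expect the main obstacle to be the \textbf{consistency/well-definedness check} in the construction: the tower decomposition gives each $x\in D$ many representations (e.g. $t^3 = t\cdot t^2$ but also must be consistent with $f$ on $\mathbb{Z}_2[t]$ computed additively, and with the functional equation applied to various invertible elements related by Hua's identity), and one must verify that all the constraints imposed by \eqref{moraes} — which a priori hold for \emph{every} nonzero $x$, an infinite family — are simultaneously satisfiable and reduce to exactly the two free parameters $A,B$. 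In particular one should check the computation of paragraph one does not secretly force $(2^{-1}-1)$-type coefficients that vanish mod $2$ only accidentally, i.e. that no hidden relation kills $f(1)$; the Example-style counterexamples in the preceding remark (where $p-1 \mid n-2$, here $p=2$, $n=1$, and indeed $1 \mid -1$) already signal that $f(1)$ is genuinely free, which is consistent with what we must prove. I would also need the characteristic-$2$ Frobenius/separability structure of $\mathbb{Z}_2(t)$ over $\mathbb{Z}_2(t^2)$ stated cleanly as a lemma before the main argument.
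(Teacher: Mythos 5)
Your overall strategy is viable and is essentially a repackaging of the paper's: the paper also reduces everything to the decomposition $D=D^2\oplus D^2t$ (written there as $x=\frac{P(t^2)+Q(t^2)t}{R(t^2)+S(t^2)t}$, using $P(t^2)=P(t)^2$) and also obtains the recursion from Hua's identity. But as written your proposal has a genuine gap: the central computation is never performed, and the form you guess for its output is wrong. Running Hua's identity in characteristic $2$ together with \eqref{moraes} yields the identity $f(aba)=ag(b)$ for all $a,b$; setting $a=1$ forces $f=g$, and then $b=1$ and $b=t$ give
\begin{equation*}
f(a^2)=af(1)=aA,\qquad f(a^2t)=af(t)=aB\quad\text{for all }a\in D,
\end{equation*}
not $f(a^2)=a^2f(1)+\cdots$ as you conjecture. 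This distinction is not cosmetic: it is precisely the linearity of $a\mapsto f(a^2)$ in $a$ (rather than in $a^2$) that makes the map compatible with additivity of the inverse Frobenius and hence makes the whole construction consistent. You also leave the deduction $f=g$ unstated, yet without it the functional equation does not close up into a recursion for $f$ alone. Until the identity $f(aba)=ag(b)$ is actually derived, neither the uniqueness argument nor the consistency of your layer-by-layer definition is established.

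A second, smaller point: the infinite tower $D\supset D^2\supset D^4\supset\cdots$ and the attendant "consistency across layers" worry are unnecessary. Since $a\mapsto a^2$ is a bijection of $D$ onto $D^2=\mathbb{Z}_2(t^2)$ and $\{1,t\}$ is a $D^2$-basis of $D$, every $x\in D$ has a \emph{unique} representation $x=u^2+v^2t$ with $u,v\in D$, and the two displayed identities force $f(x)=uA+vB$ in one step; this settles uniqueness, and taking this formula as the definition settles existence (well-definedness and additivity are immediate because $x\mapsto(u,v)$ is an additive bijection, and \eqref{moraes} follows from $x^{-1}=\bigl(\tfrac{u}{x}\bigr)^2+\bigl(\tfrac{v}{x}\bigr)^2t$). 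Executed this way your approach is actually cleaner than the paper's polynomial-fraction bookkeeping, but the proposal as submitted stops short of the one computation on which everything depends.
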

\begin{proof}
1) \textbf{Uniqueness}:
Let $f,g:D\rightarrow D$ be additive functions satisfying $f(1)=A$, $f(t)=B$, and equation \eqref{moraes} for all $x\neq0$. Then, for every $a,b\in D$ such that $ab\neq0,1$, using the Hua identity and the fact that $\mathrm{Char}(D)=2$, we have:
\begin{equation*}
aba=a+(a^{-1}+(b^{-1}+a)^{-1})^{-1}.
\end{equation*}
This yields:
\begin{equation*}
\begin{aligned}
f(aba) &= f(a+(a^{-1}+(b^{-1}+a)^{-1})^{-1}) \\
&= f(a)+f((a^{-1}+(b^{-1}+a)^{-1})^{-1}) \\
&\overset{\eqref{moraes}}{=} f(a)+(a^{-1}+(b^{-1}+a)^{-1})^{-1}g(a^{-1}+(b^{-1}+a)^{-1}) \\
&= f(a)+(aba+a)g(a^{-1}+(b^{-1}+a)^{-1}) \\
&= f(a)+(aba+a)(g(a^{-1})+g((b^{-1}+a)^{-1})) \\
&= f(a)+(aba+a)g(a^{-1})+(aba+a)g((b^{-1}+a)^{-1}) \\
&\overset{\eqref{moraes}}{=} f(a)+(aba+a)a^{-1}f(a)+(aba+a)(b^{-1}+a)^{-1}f(b^{-1}+a) \\
&= f(a)+(ab+1)f(a)+ab(a+b^{-1})(b^{-1}+a)^{-1}f(b^{-1}+a) \\
&= abf(a)+abf(b^{-1}+a) \\
&= abf(a)+ab(f(b^{-1})+f(a)) \\
&= abf(a)+abf(b^{-1})+abf(a) \\
&= abf(a)+abb^{-1}g(b)+abf(a) \\
&= abf(a)+ag(b)+abf(a) \\
&= ag(b),
\end{aligned}
\end{equation*}
which simplifies to:
\begin{equation}
\label{14}
f(aba)=ag(b).
\end{equation}

By $f(0)=g(0)=0$ and \eqref{moraes}, equation \eqref{14} also holds for $a,b\in D$ such that $ab=0$ or $ab=1$. Thus, \eqref{14} holds for all $a,b\in D$. By \eqref{14} with $a=1$, for all $b\in D$, we have $f(b)=g(b)$. So $f=g$, and for all $a\in D$, we have:
\begin{equation}
\label{15}
f(aba)=af(b).
\end{equation}

For every $n\in\mathbb{Z}$, by \eqref{15} with $a=t$ and $b=t^n$, we have:
\begin{equation}
\label{16}
f(t^{n+2})=tf(t^n).
\end{equation}

Therefore, by using induction on $n$ and \eqref{16}, for all $n\in\mathbb{Z}$, we have:
\begin{equation*}
f(t^{2n})=t^nA, \quad f(t^{2n+1})=t^nB.
\end{equation*}

Moreover, if $P(t)$ is a polynomial with coefficients in $\mathbb{Z}_2$ and $p\in\mathbb{Z}$, then it's evident that:
\begin{equation}
\label{18}
f(P(t^2)t^{2p})=P(t)t^{p}A, \quad f(P(t^2)t^{2p+1})=P(t)t^{p}B.
\end{equation}

Any element $x\in D$ can be represented as:
\begin{equation*}
x=\frac{P(t^2)+Q(t^2)t}{R(t^2)+S(t^2)t},
\end{equation*}
where $P(t),Q(t),R(t),S(t)$ are polynomials with coefficients in $\mathbb{Z}_2$. Hence, by representing:
\begin{equation*}
P(t)=\sum_{p=0}^{\hat{p}}P_pt^p, \quad Q(t)=\sum_{q=0}^{\hat{q}}Q_qt^q,
\end{equation*}
where $P_p,Q_q\in\mathbb{Z}_2$, we have:
\begin{equation*}
\begin{aligned}
f(x) &= f\left(\frac{P(t^2)+Q(t^2)t}{R(t^2)+S(t^2)t}\right) \\
&= \sum_{p=0}^{\hat{p}}P_pf\left(\frac{t^{2p}}{R(t^2)+S(t^2)t}\right)+\sum_{q=0}^{\hat{q}}Q_qf\left(\frac{t^{2q+1}}{R(t^2)+S(t^2)t}\right) \\
&\overset{\text{\eqref{moraes}}}{=} \sum_{p=0}^{\hat{p}}\frac{P_pt^{2p}}{R(t^2)+S(t^2)t}f\left(\frac{R(t^2)+S(t^2)t}{t^{2p}}\right)+\sum_{q=0}^{\hat{q}}\frac{Q_qt^{2q+1}}{R(t^2)+S(t^2)t}f\left(\frac{R(t^2)+S(t^2)t}{t^{2q+1}}\right) \\
&= \sum_{p=0}^{\hat{p}}\frac{P_pt^{2p}f(R(t^2)t^{-2p}+S(t^2)t^{-2p+1})}{R(t^2)+S(t^2)t}+\sum_{q=0}^{\hat{q}}\frac{Q_qt^{2q+1}f(R(t^2)t^{-2q-1}+S(t^2)t^{-2q})}{R(t^2)+S(t^2)t} \\
&\overset{\text{\eqref{18}}}{=} \sum_{p=0}^{\hat{p}}\frac{P_pt^p(R(t)A+S(t)B)}{R(t^2)+S(t^2)t}+\sum_{q=0}^{\hat{q}}\frac{Q_qt^q(R(t)B+S(t)tA)}{R(t^2)+S(t^2)t} \\
&= \frac{P(t)(R(t)A+S(t)B)}{R(t^2)+S(t^2)t}+\frac{Q(t)(R(t)B+S(t)tA)}{R(t^2)+S(t^2)t} \\
&= \frac{P(t)R(t)+Q(t)S(t)t}{R(t^2)+S(t^2)t}A+\frac{P(t)S(t)+Q(t)R(t)}{R(t^2)+S(t^2)t}B,
\end{aligned}
\end{equation*}
thus:
\begin{equation}
\label{17}
f\left(\frac{P(t^2)+Q(t^2)t}{R(t^2)+S(t^2)t}\right)=\frac{P(t)R(t)+Q(t)S(t)t}{R(t^2)+S(t^2)t}A+\frac{P(t)S(t)+Q(t)R(t)}{R(t^2)+S(t^2)t}B.
\end{equation}

2) \textbf{Existence}:
It remains to show that for any $A,B\in D$, there exists a function $f:D\rightarrow D$ such that \eqref{17} holds for any polynomials $P(t),Q(t),R(t),S(t)$ with coefficients in $\mathbb{Z}_2$, and to prove that indeed $f$ is additive, $f(1)=A$, $f(t)=B$, and if $g=f$, then $f$ and $g$ satisfy \eqref{moraes} for all $x\neq0$.

This task can be divided into steps. Notice that for every polynomial $P(t)$, we have:
\begin{equation}
\label{d3}
P(t^2)=P(t)^2,
\end{equation}
because the coefficients are in $\mathbb{Z}_2$.

\textbf{Step 1:} We will prove the existence of a function $f:D\rightarrow D$ satisfying \eqref{17} for any polynomials $P(t),Q(t),R(t),S(t)$ with coefficients in $\mathbb{Z}_2$. In other words, we will show that the function $f$ as described in \eqref{17} is well-defined.

Let $P(t),\hat{P}(t),Q(t),\hat{Q}(r),R(t),\hat{R}(t),S(t),\hat{S}(t)$ be polynomials such that:
\begin{equation*}
\frac{P(t^2)+Q(t^2)t}{R(t^2)+S(t^2)t}=\frac{\hat{P}(t^2)+\hat{Q}(t^2)t}{\hat{R}(t^2)+\hat{S}(t^2)t}.
\end{equation*}
Then:
\begin{equation*}
(P(t^2)+Q(t^2)t)(\hat{R}(t^2)+\hat{S}(t^2)t)=(\hat{P}(t^2)+\hat{Q}(t^2)t)(R(t^2)+S(t^2)t).
\end{equation*}
Hence:
\begin{equation*}
\begin{aligned}
&(P(t^2)\hat{R}(t^2)+Q(t^2)\hat{S}(t^2)t^2)+(P(t^2)\hat{S}(t^2)+Q(t^2)\hat{R}(t^2))t \\
&=(\hat{P}(t^2)R(t^2)+\hat{Q}(t^2)S(t^2)t^2)+(\hat{P}(t^2)S(t^2)+\hat{Q}(t^2)R(t^2))t.
\end{aligned}
\end{equation*}

Both sides of the above equality are polynomials over $t$, so we can compare their coefficients and obtain the following equalities:
\begin{equation}
\label{d1}
P(t)\hat{R}(t)+Q(t)\hat{S}(t)t=\hat{P}(t)R(t)+\hat{Q}(t)S(t)t
\end{equation}
\begin{equation}
\label{d2}
P(t)\hat{S}(t)+Q(t)\hat{R}(t)=\hat{P}(t)S(t)+\hat{Q}(t)R(t)
\end{equation}

Multiplying \eqref{d1} by $R(t)\hat{R}(t)$ and by $S(t)\hat{S}(t)t$, and multiplying \eqref{d2} by $R(t)\hat{S}(t)t$ and by $S(t)\hat{R}(t)t$, we obtain respectively:
\begin{equation*}
\begin{aligned}
&P(t)R(t)\hat{R}(t)^2+Q(t)R(t)\hat{R}(t)\hat{S}(t)t=\hat{P}(t)\hat{R}(t)R(t)^2+\hat{Q}(t)\hat{R}(t)R(t)S(t)t
\end{aligned}
\end{equation*}
\begin{equation*}
\begin{aligned}
&P(t)S(t)\hat{R}(t)\hat{S}(t)t+Q(t)S(t)\hat{S}(t)^2t^2=\hat{P}(t)\hat{S}(t)R(t)S(t)t+\hat{Q}(t)\hat{S}(t)S(t)^2t^2
\end{aligned}
\end{equation*}
\begin{equation*}
\begin{aligned}
&P(t)R(t)\hat{S}(t)^2t+Q(t)R(t)\hat{R}(t)\hat{S}(t)t=\hat{P}(t)\hat{S}(t)R(t)S(t)t+\hat{Q}(t)\hat{S}(t)R(t)^2t
\end{aligned}
\end{equation*}
\begin{equation*}
\begin{aligned}
&P(t)S(t)\hat{R}(t)\hat{S}(t)t+Q(t)S(t)\hat{R}(t)^2t=\hat{P}(t)\hat{R}(t)S(t)^2t+\hat{Q}(t)\hat{R}(t)R(t)S(t)t
\end{aligned}
\end{equation*}

By summing up the above four equations, and using $\mathrm{Char}(D)=2$ and \eqref{d3}, we obtain:
\begin{equation*}
(P(t)R(t)+Q(t)S(t)t)(\hat{R}(t^2)+\hat{S}(t^2)t)=(\hat{P}(t)\hat{R}(t)+\hat{Q}(t)\hat{S}(t)t)(R(t^2)+S(t^2)t)
\end{equation*}
so that:
\begin{equation}
\label{ea1}
\frac{P(t)R(t)+Q(t)S(t)t}{R(t^2)+S(t^2)t}=\frac{\hat{P}(t)\hat{R}(t)+\hat{Q}(t)\hat{S}(t)t}{\hat{R}(t^2)+\hat{S}(t^2)t}
\end{equation}

Now, multiplying \eqref{d1} by $S(t)\hat{R}(t)$ and by $R(t)\hat{S}(t)$, and multiplying \eqref{d2} by $S(t)\hat{S}(t)t$ and by $R(t)\hat{R}(t)$, we obtain respectively:
\begin{equation*}
\begin{aligned}
&P(t)S(t)\hat{R}(t)^2+Q(t)S(t)\hat{R}(t)\hat{S}(t)t=\hat{P}(t)\hat{R}(t)R(t)S(t)+\hat{Q}(t)\hat{R}(t)S(t)^2t
\end{aligned}
\end{equation*}
\begin{equation*}
\begin{aligned}
&P(t)R(t)\hat{R}(t)\hat{S}(t)+Q(t)R(t)\hat{S}(t)^2t=\hat{P}(t)\hat{S}(t)R(t)^2+\hat{Q}(t)\hat{S}(t)R(t)S(t)t
\end{aligned}
\end{equation*}
\begin{equation*}
\begin{aligned}
&P(t)S(t)\hat{S}(t)^2t+Q(t)S(t)\hat{R}(t)\hat{S}(t)t=\hat{P}(t)\hat{S}(t)S(t)^2t+\hat{Q}(t)\hat{S}(t)R(t)S(t)t
\end{aligned}
\end{equation*}
\begin{equation*}
\begin{aligned}
&P(t)R(t)\hat{R}(t)\hat{S}(t)+Q(t)R(t)\hat{R}(t)^2=\hat{P}(t)\hat{R}(t)R(t)S(t)+\hat{Q}(t)\hat{R}(t)R(t)^2
\end{aligned}
\end{equation*}

By adding the above four equations, and using $\mathrm{Char}(D)=2$ and \eqref{d3}, we have:
\begin{equation*}
(P(t)S(t)+Q(t)R(t))(\hat{R}(t^2)+\hat{S}(t^2)t)=(\hat{P}(t)\hat{S}(t)+\hat{Q}(t)\hat{R}(t))(R(t^2)+S(t^2)t)
\end{equation*}
so that:
\begin{equation}
\label{ea2}
\frac{P(t)S(t)+Q(t)R(t)}{R(t^2)+S(t^2)t}=\frac{\hat{P}(t)\hat{S}(t)+\hat{Q}(t)\hat{R}(t)}{\hat{R}(t^2)+\hat{S}(t^2)t}
\end{equation}

Multiplying \eqref{ea1} by $A$, multiplying \eqref{ea2} by $B$, and summing, we obtain:
\begin{equation*}
\frac{P(t)R(t)+Q(t)S(t)t}{R(t^2)+S(t^2)t}A+\frac{P(t)S(t)+Q(t)R(t)}{R(t^2)+S(t^2)t}B
\end{equation*}
\begin{equation*}
=\frac{\hat{P}(t)\hat{R}(t)+\hat{Q}(t)\hat{S}(t)t}{\hat{R}(t^2)+\hat{S}(t^2)t}A+\frac{\hat{P}(t)\hat{S}(t)+\hat{Q}(t)\hat{R}(t)}{\hat{R}(t^2)+\hat{S}(t^2)t}B
\end{equation*}

\textbf{Step 2:} Let's prove that $f$ is additive. Take $x$ and $y$ from $D$. By bringing the fractions to a common denominator, we can express them as polynomials $P(t)$, $\hat{P}(t)$, $Q(t)$, $\hat{Q}(t)$, $R(t)$, and $S(t)$ as follows:

\begin{equation*}
x=\frac{P(t^2)+Q(t^2)t}{R(t^2)+S(t^2)t},\quad\quad y=\frac{\hat{P}(t^2)+\hat{Q}(t^2)t}{R(t^2)+S(t^2)t},
\end{equation*}

resulting in:

\begin{equation*}
x+y=\frac{(P(t^2)+\hat{P}(t^2))+(Q(t^2)+\hat{Q}(t^2))t}{R(t^2)+S(t^2)t}.
\end{equation*}

Therefore,

\begin{equation*}
\begin{aligned}
f(x+y)&\overset{\eqref{17}}{=}\frac{(P(t)+\hat{P}(t))R(t)+(Q(t)+\hat{Q}(t))S(t)t)}{R(t^2)+S(t^2)t}A\\
&+\frac{(P(t)+\hat{P}(t))S(t)+(Q(t)+\hat{Q}(t))R(t)}{R(t^2)+S(t^2)t}B\\
&=\frac{P(t)R(t)+Q(t)S(t)t}{R(t^2)+S(t^2)t}A+\frac{P(t)S(t)+Q(t)R(t)}{R(t^2)+S(t^2)t}B\\
&+\frac{\hat{P}(t)R(t)+\hat{Q}(t)S(t)t}{R(t^2)+S(t^2)t}A+\frac{\hat{P}(t)S(t)+\hat{Q}(t)R(t)}{R(t^2)+S(t^2)t}B\\
&\overset{\eqref{17}}{=}f(x)+f(y).
\end{aligned}
\end{equation*}

\textbf{Step 3:} Let's establish that $f(1)=A$ and $f(t)=B$. Indeed, we have:

\begin{equation*}
1=\frac{1+0t}{1+0t},
\end{equation*}

thus,

\begin{equation*}
f(1)\overset{\eqref{17}}{=}\frac{1\cdot1+0\cdot0t}{1+0t}A+\frac{1\cdot 0+0\cdot 1}{1+0t}B=A.
\end{equation*}

Moreover:

\begin{equation*}
t=\frac{0+1t}{1+0t},
\end{equation*}

therefore,

\begin{equation*}
f(t)\overset{\eqref{17}}{=}\frac{0\cdot1+1\cdot0t}{1+0t}A+\frac{0\cdot 0+1\cdot 1}{1+0t}B=B.
\end{equation*}

\textbf{Step 4:} Let's show that if $g=f$, then $(f,g)$ satisfy \eqref{moraes} for all $x\neq0$. Let $x\neq0$, then we have polynomials $P(t)$, $Q(t)$, $R(t)$, and $S(t)$ such that:

\begin{equation*}
x=\frac{P(t^2)+Q(t^2)t}{R(t^2)+S(t^2)t},
\end{equation*}

hence,

\begin{equation*}
x^{-1}=\frac{R(t^2)+S(t^2)t}{P(t^2)+Q(t^2)t},
\end{equation*}

thus,

\begin{equation*}
\begin{aligned}
xf(x^{-1})&\overset{\eqref{17}}{=}\frac{P(t^2)+Q(t^2)t}{R(t^2)+S(t^2)t}\cdot\left(\frac{R(t)P(t)+S(t)Q(t)t}{P(t^2)+Q(t^2)t}A+\frac{R(t)Q(t)+S(t)P(t)}{P(t^2)+Q(t^2)t}B\right)\\
&=\frac{P(t)R(t)+Q(t)S(t)t}{R(t^2)+S(t^2)t}A+\frac{P(t)S(t)+Q(t)R(t)}{R(t^2)+S(t^2)t}B\\
&\overset{\eqref{17}}{=}f(x),
\end{aligned}
\end{equation*}

so, using $\mathrm{Char}(D)=2$, we find:

\begin{equation*}
x^{-1}f(x)+f(x^{-1})=0.
\end{equation*}

Hence, if $g=f$, then $(f,g)$ satisfy \eqref{moraes} for all $x\neq0$.

\textbf{Conclusion}:
Thus, given any $A,B\in D$, we have proven both uniqueness and existence of the additive functions $f,g:D\rightarrow D$ satisfying \eqref{moraes} for every $x\neq0$. Therefore, the theorem is established.
\end{proof}

\end{document}